\renewcommand{\P}{ \ensuremath{\mathbb{P}}}
\title{Volume functions of linear series}
\theoremstyle{plain}
\newtheorem{theorem}{Theorem}[section]
\newtheorem*{theoremA}{Theorem A}
\newtheorem*{theoremB}{Theorem B}
\newtheorem{lemma}[theorem]{Lemma}
\newtheorem{proposition}[theorem]{Proposition}
\newtheorem*{qsn}{Question}
\theoremstyle{definition}
\newtheorem{remark}[theorem]{Remark}
\def\Z{{\mathbb Z}} 
\def\N{{\mathbb N}} 
\def\C{{\mathbb C}} 
\def\R{{\mathbb R}} 
\def\Q{{\mathbb Q}} 
\def\P{{\mathbb P}} 
\def\OO{{\mathcal O}} 
\newcommand{\HH}[3]{H^{{#1}} \big( {#2} , {#3} 
\big) } 
\newcommand{\hh}[3]{h^{{#1}} \big( {#2} , {#3} 
\big) }
\newcommand{\deq}{\ensuremath{ \stackrel{\textrm{def}}{=}}} 
\newcommand{\equ}{\ensuremath{ \,=\, }}
\newcommand{\dleq}{\ensuremath{ \,\leq\, }}
\newcommand{\Amp}{\textup{Amp}(X)_{\mathbb{R}}}
\newcommand{\Nef}{\textup{Nef}(X)_{\mathbb{R}}}
\newcommand{\Neft}{\textup{Nef}(X_t)_{\mathbb{R}}}
\newcommand{\bull}{\bullet}
\newcommand{\st}[1]{\ensuremath{ \left\{ #1 \right\} }}
\DeclareMathOperator{\NN}{N}
\DeclareMathOperator{\vol}{vol}
\begin{document}

\author{Alex K\"uronya}
\address{Budapest University of Technology and Economics, Department of Algebra, Budapest P.O. Box 91, H-1521 Hungary}
\email{{\tt alex.kuronya@math.bme.hu}}

\author{Victor Lozovanu}
\address{University of Michigan, Department of Mathematics, Ann Arbor, MI 48109-1109, USA}
\email{{\tt vicloz@umich.edu}}

\author{Catriona Maclean}
\address{Universit\'e Joseph Fourier, UFR de Math\'ematiques, 100 rue des Maths, BP 74, 38402 St Martin d'H\'eres, France}
\email{\tt Catriona.Maclean@ujf-grenoble.fr}

\begin{abstract}
The volume of a Cartier divisor is an asymptotic invariant, which measures the rate of growth of sections of powers of the divisor. 
It extends to a continuous, homogeneous, and log-concave function on the whole N\'eron--Severi space, thus giving rise to a basic invariant of the underlying
projective variety. Analogously, one can also define the volume function of a possibly non-complete multigraded linear series. 

In this paper we will address the
question of characterizing the class of functions arising on the one hand as  volume functions of multigraded linear series and on the other hand as  volume
functions of projective varieties. 

In the multigraded setting, inspired by the work of Lazarsfeld and Musta\c t\u a \cite{LM08} on Okounkov bodies, we show that
any continuous, homogeneous, and log-concave function appears as the volume function of a multigraded linear series. By contrast we show that there exists
countably many functions which arise as the volume functions of projective varieties. We end the paper with an example, where the volume function of a
projective variety is given by a transcendental formula, emphasizing the complicated nature of the volume in the classical case.
\end{abstract}

\thanks{During this project the first author was partially supported by  the CNRS, the  DFG-Leibniz program, the SFB/TR 45 ``Periods, moduli spaces and arithmetic of algebraic varieties'',  the OTKA Grants 61116,77476, 77604, and a Bolyai Fellowship by the Hungarian Academy of Sciences. }

\maketitle
\section*{Introduction}
Let $X$ be a smooth complex projective variety of dimension $n$ over the complex numbers,  let $D$ be a Cartier divisor on $X$. The volume of $D$ is defined as 
\[
\textup{vol}_{X}(D) \ = \ \limsup_{k\rightarrow \infty}\frac{\textup{dim}_{\mathbb{C}}(H^0(X,\mathcal{O}_X(kD)))}{k^n/n!}\ .
\]
The volume and its various versions have recently played  a crucial role in several important developments in higher dimensional geometry, see for example
\cite{Ta06}, \cite{HM06}. 

In the classical setting of ample divisors, the volume of $D$ is simply its top self-intersection. Starting with the work of Fujita \cite{Fu94}, Nakayama \cite{Na04}, and Tsuji \cite{Ts99}, it became gradually clear that the volume of big divisors --- that is, ones with  $\vol_X(D)>0$ --- displays a surprising number of properties analogous to that of ample ones. Notably,  it depends only on the numerical class of $D$, it is homogeneous of degree $n$, and satisfies a Lipschitz-type property (\cite[Section 2.2.C]{Laz04}). Consequently, one can extend the volume to a continuous function 
\[
\textup{vol}_X \ : \ N^1(X)_{\R} \ \longrightarrow \ \R_+\ ,
\]
where by $N^1(X)_{\R}$ we mean the finite-dimensional real vector space of numerical equivalence classes of $\R$-divisors. Besides continuity and
homogeneity, another important feature of the volume function is log-concavity of degree $n$, i.e. for any two classes $\xi , \xi'\in \textup{Big}(X)_{\R}$
we have 
\[
\textup{vol}_X(\xi +\xi')^{1/n} \ \geq \ \textup{vol}_X(\xi )^{1/n} \ + \ \textup{vol}_X(\xi')^{1/n}\ .
\]
Given a sufficient amount of  information, the volume function associated to a variety can be explicitly computed under certain circumstances. Examples include
all  smooth surfaces \cite{BKS}, toric varieties \cite{HKP}, and homogeneous spaces. 

However, beside what has been mentioned above,  relatively little is known about its global behavior, and understanding it more clearly remains a very important
quest.\footnote{In their interesting paper \cite{BFJ07}, Boucksom-Favre-Jonsson found a nice formula for the derivative of $\textup{vol}_X$ in any direction.}

In \cite{LM08}, Lazarsfeld and Musta\c t\u a showed that in fact most of the properties of $\textup{vol}_X$ are quite formal in nature, and their validity can be extended to the non-complete multigraded setting. Specifically, fix a choice of Cartier divisors $\mathbf{D}=(D_1,\ldots ,D_{\rho})$ on $X$ (where $\rho$ is an arbitrary positive integer for the time being, but soon it will be $\dim_\R \NN^1(X)_\R$), and set $\mathbf{m}\mathbf{D}=m_1D_{1}+\ldots +m_{\rho}D_{\rho}$ for any $\mathbf{m}=(m_1,\ldots ,m_{\rho})\in \N^{\rho}$. A multigraded linear series $W_{\bullet}$ on $X$ associated to $D_1,\ldots ,D_{\rho}$ consists of subspaces
\[
W_{\mathbf{m}} \ \subseteq \ H^0(X,\mathcal{O}_X(\mathbf{m}\mathbf{D}))\ ,
\]
such that $R(W_{\bullet}) \equ \oplus W_{\mathbf{m}}$ 
is a subalgebra of the section ring 
\[
R(D_1,\ldots ,D_{\rho})=\oplus_{\mathbf{m}\in\N^{\rho}} H^0(X,\mathcal{O}_X(\mathbf{m}\mathbf{D}))\ .
\] 
The support of $W_{\bullet}$ is then defined to be the  closed convex cone in  $\R^{\rho}_+$ spanned by all multi-indices $\mathbf{m}\in\N^{\rho}$ such that
$W_{\mathbf{m}}\neq 0$. Given $\mathbf{a}\in\N^{\rho}$, set 
\[
\textup{vol}_{W_{\bullet}}(\mathbf{a}) \deq   
\limsup_{k \to \infty} \frac{\textup{dim}_{\C}(W_{k\cdot\mathbf{a}})}{k^n/n!}\ .
\]
Exactly as in the complete case, the above assignment defines the volume function of $W_{\bullet}$
\[
 \textup{vol}_{W_{\bullet}} \ : \ \N^{\rho} \ \longrightarrow \ \R_+\ .
\]
Based on earlier  work of Okounkov, \cite{Ok96} and \cite{Ok03}, the authors of \cite{LM08} associate a convex cone --- the  so-called  Okounkov cone --- to a
multigraded linear series on a projective variety. With the help of  convex geometry and semigroup theory they show that  the formal
properties of the global volume function persist in the multigraded setting under very mild hypotheses. 

Precisely as in the global case, the function
$\mathbf{m}\mapsto \textup{vol}_{W_{\bullet}}(\mathbf{m})$ extends uniquely to a continuous function
\[
\textup{vol}_{W_{\bullet}} \ : \ \textup{int(supp(}W_{\bullet})) \ \longrightarrow \ \R_+\ ,
\]
which is homogeneous,  log-concave of degree $n$, and extends continuously to the entire $\textup{supp}(W_{\bullet})$. The  construction generalizes the
classical case: whenever $X$ is an irreducible  projective variety, the cone of big divisors $\textup{Big}(X)_{\R}$ is pointed and $\textup{vol}_X$ vanishes
outside of it. Pick Cartier divisors  $D_1,\ldots ,D_{\rho}$ on $X$, whose classes in $N^1(X)_{\R}$ generate a cone containing $\textup{Big}(X)_{\R}$. Then
$\textup{vol}_X=\textup{vol}_{W_{\bullet}}$ on $\textup{Big}(X)_{\R}$, where $W_{\bullet}=R(D_1,\ldots ,D_{\rho})$.

In this "in vitro"  setting, we prove first that in fact any continuous, homogeneous, and log-concave function arises as the volume function of an appropriate multigraded linear series.

\begin{theoremA}\label{Theorem A}
Let $K\subseteq \R^{\rho}_+$ be a closed convex cone with non-empty interior, $f:K\rightarrow \R_+$  a continuous function, which is non-zero, homogeneous, and
log-concave of degree $n$  in the interior of $K$. Let $X$ be an arbitrary smooth, projective toric variety of dimension $n$ and Picard number $\rho$. Then there
exists  a multigraded linear series $W_{\bullet}$ on $X$ such that $\textup{vol}_{W_{\bullet}} \equiv f$ on the interior of
$K$. Moreover we have $\textup{supp}(W_{\bullet})=K$.
\end{theoremA}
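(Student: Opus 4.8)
The plan is to reduce the theorem to a problem in convex geometry by means of the Okounkov-cone machinery of \cite{LM08}, and then to solve that problem on a toric variety. Recall that a multigraded linear series $W_{\bullet}$ on an $n$-dimensional variety with respect to $D_1,\dots,D_{\rho}$ carries a valuation semigroup $\Gamma(W_{\bullet})\subseteq\N^n\times\N^{\rho}$, its closed convex hull the Okounkov cone $\Delta(W_{\bullet})\subseteq\R^n_{\ge 0}\times\R^{\rho}_{\ge 0}$, and — under the mild hypotheses of \cite{LM08} — one has $\textup{vol}_{W_{\bullet}}(\mathbf{a})=n!\cdot\textup{vol}_{\R^n}\big(\Delta(W_{\bullet})\cap(\R^n\times\{\mathbf{a}\})\big)$ for $\mathbf{a}$ in the interior of the support. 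So it suffices to (i) manufacture a closed convex cone $\Delta\subseteq\R^n_{\ge 0}\times K$ whose fibre over $\mathbf{a}\in\textup{int}(K)$ has Euclidean volume $c\,f(\mathbf{a})/n!$, and (ii) realise $\Delta$ as the Okounkov cone of some $W_{\bullet}$ on a smooth projective variety of dimension $n$ and Picard number $\rho$.

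For (i): homogeneity of degree $n$ together with the log-concavity inequality $f(\xi+\xi')^{1/n}\ge f(\xi)^{1/n}+f(\xi')^{1/n}$ shows that, for any constant $c>0$, the function $\mu\deq(c\,f)^{1/n}$ is concave and positively homogeneous of degree $1$ on $\textup{int}(K)$; moreover $f>0$ there, since a non-negative concave function that vanishes at an interior point of a convex set vanishes identically. I would then simply take
\[
\Delta\ \deq\ \{\,(x,\mathbf{a})\in\R^n_{\ge 0}\times K\ :\ x_1+\cdots+x_n\le\mu(\mathbf{a})\,\}\ ,
\]
which is a closed convex cone (convexity is exactly concavity of $\mu$), of dimension $n+\rho$ because $K$ has non-empty interior, and whose fibre over $\mathbf{a}\in\textup{int}(K)$ is a dilated standard simplex with $n!\cdot\textup{vol}_{\R^n}$ equal to $\mu(\mathbf{a})^n=c\,f(\mathbf{a})$.

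For (ii): fix a smooth projective toric variety $X$ of dimension $n$ with Picard number exactly $\rho$ (e.g. an iterated equivariant blow-up of $\P^n$ at torus-fixed points), pick $\R$-linearly independent integral ample divisors $D_1,\dots,D_{\rho}$ spanning $N^1(X)_{\R}$, and choose a flag of torus-invariant subvarieties through a torus-fixed point. Then $H^0(X,\OO_X(\mathbf{m}D))$ has the monomial basis indexed by the lattice points of the polytope $P_{\mathbf{m}D}=\sum m_iP_{D_i}$, the associated valuation $\nu$ sends $\chi^u$ to its exponent relative to the distinguished vertex of $P_{\mathbf{m}D}$ (an affine-linear, hence additive-along-the-grading, recipe), and ampleness of the $D_i$ yields $P_{\mathbf{m}D}+P_{\mathbf{m}'D}\subseteq P_{(\mathbf{m}+\mathbf{m}')D}$ together with uniform "room" near that vertex. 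Shrinking $c$ so that the fibre $\Delta_{\mathbf{m}}$ fits inside this room for every $\mathbf{m}$ — this is the only role of $c$ — I would let $W_{\mathbf{m}}$ be the span of those monomials whose $\nu$-value lies in $\Delta_{\mathbf{m}}$. Since $\Delta$ is a convex cone it is closed under addition, so additivity of $\nu$ makes $R(W_{\bullet})$ a subalgebra, $W_{\bullet}$ a multigraded linear series for $D_1,\dots,D_{\rho}$, and $\Gamma(W_{\bullet})=\Delta\cap\Z^{n+\rho}$. As $\Delta$ is a full-dimensional closed convex cone, its rational interior points are dense and each is a positive multiple of a lattice point, so $\Delta(W_{\bullet})=\Delta$ and the group generated by $\Gamma(W_{\bullet})$ is all of $\Z^{n+\rho}$; hence \cite{LM08} applies and $\textup{vol}_{W_{\bullet}}(\mathbf{a})=n!\cdot\textup{vol}_{\R^n}(\Delta_{\mathbf{a}})=c\,f(\mathbf{a})$ on $\textup{int}(K)$. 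Finally $\textup{supp}(W_{\bullet})$ is the projection of $\Delta$ to $\R^{\rho}$, which lies in $K$; and for $\mathbf{a}\in\textup{int}(K)$ the fibres $\Delta_{k\mathbf{a}}$ have volume tending to infinity, so $W_{k\mathbf{a}}\neq 0$ for $k\gg 0$ and $\mathbf{a}\in\textup{supp}(W_{\bullet})$, whence $\textup{supp}(W_{\bullet})=K$ by closedness.

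The convex-geometry step (i) is essentially forced by the hypotheses, so I expect the crux to be (ii): producing a \emph{smooth} variety of Picard number \emph{exactly} $\rho$ that nonetheless carries enough global sections to cut out, degree by degree, a subalgebra whose valuation semigroup is the prescribed — possibly irrational — cone, and checking that the hypotheses of \cite{LM08} (under which $\textup{vol}_{W_{\bullet}}$ is computed by the Okounkov cone) really do hold for this particular $W_{\bullet}$. The toric model is what makes this manageable, since there the section ring, the Okounkov valuation, and the Minkowski-additivity underlying the algebra structure are all completely explicit.
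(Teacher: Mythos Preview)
Your proposal is correct and follows essentially the same two-step strategy as the paper: first realise $f$ (up to a constant) as the fibrewise Euclidean volume of a closed convex cone $\Delta\subseteq\R^n_+\times K$, then realise $\Delta$ as the Okounkov cone of a multigraded linear series on a smooth projective toric variety of Picard number $\rho$ by taking $W_{\mathbf m}$ to be the span of those torus characters whose valuation lands in $\Delta_{\mathbf m}$, with the constant $c$ absorbing the rescaling needed to fit $\Delta$ inside the Okounkov cone of the complete linear series.

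The one noteworthy difference is in step (i). The paper builds $\Delta$ out of round balls of radius $(f/C_n)^{1/n}$ and must introduce an auxiliary linear map $g:\R^{\rho}\to\R^n$ (via a strictly positive linear form on $K$) to push these balls into the interior of $\R^n_+$. Your choice of the dilated standard simplex $\{x\in\R^n_{\ge 0}:\sum x_i\le\mu(\mathbf a)\}$ is a genuine simplification: the fibre already sits in $\R^n_{\ge 0}$ with a vertex at the origin, its $n!$-times volume is exactly $\mu(\mathbf a)^n$, and it matches the local geometry of the lattice polytope $P_{\mathbf m D}$ at the distinguished torus-fixed vertex, so the ``fitting'' step becomes just a uniform dilation. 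One small imprecision: your claim $\Gamma(W_{\bullet})=\Delta\cap\Z^{n+\rho}$ presumes the semigroup of the complete series is saturated, which need not hold on the nose; what you actually need (and what the paper uses) is only that every lattice point of $\Delta$ has a positive multiple in $\Gamma(V_{\bullet})$, which suffices for $\Delta(W_{\bullet})=\Delta$ and for the hypotheses of \cite{LM08}.
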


As a consequence, notice that the volume function $\textup{vol}_{W_{\bullet}}$ of a multigraded linear series $W_{\bullet}$ can be pretty wild. This is due to the following connection. Alexandroff~\cite{Al39} showed that a function as in Theorem A is almost everywhere twice differentiable; at the same time, one can give examples of functions of this sort, which are nowhere three times differentiable (cf. Remark~\ref{nowhere differentiable}). This gives a positive answer to \cite[Problem 7.2]{LM08}. 

For the proof of Theorem A we first check that any function as in the statement is the Euclidean volume function of a pointed cone. Then using toric geometry we associate to a multigraded linear series a pointed cone. We finish the proof of Theorem A by giving a recipe for the inverse process,  constructing a multigraded linear series from a cone. 

It follows from Theorem A that there exist  uncountably many volume functions in the non-complete case. In comparison, in the complete  case we prove that in fact there are only countably many of them:

\begin{theoremB}\label{Theorem B}
Let $V_{\Z} =\Z^{\rho}$ be a lattice inside the vector space $V_{\R}=V_{\Z}\otimes_{\Z}\R$. Then there exist countably many functions $f_j:V_{\R}\rightarrow
\R_+$ with $j\in\N$, so that for any irreducible projective variety $X$ of dimension $n$ and Picard number $\rho$, we can construct an integral linear
isomorphism          
\[
\pi_{X} : \ V_{\R} \ \rightarrow \ N^1(X)_{\R}
\]
with the property that $\textup{vol}_X \ \circ \ \pi_X \ =\ f_j$ for some $j\in\N$.
\end{theoremB}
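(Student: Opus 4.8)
The plan is to fix, once and for all, a countable family of "model" volume functions indexed by combinatorial data, and then show that every pair $(X,D_1,\dots,D_\rho)$ realizes one of them via an integral change of basis. The starting observation is that $\textup{vol}_X$ only depends on the numerical class of a divisor, and that by Theorem~A (or rather the Okounkov-body machinery of \cite{LM08} underlying it) the value $\textup{vol}_X(\xi)$ for $\xi\in N^1(X)_\R$ is computed as the normalized Euclidean volume of a slice of the Okounkov body attached to the section ring. The key finiteness input will be that the relevant Okounkov bodies, as $\xi$ ranges over $N^1(X)_\R$, fit together into a single convex cone $\Delta\subseteq\R^n\times N^1(X)_\R$ which is \emph{rational polyhedral} whenever $X$ is suitably chosen --- for instance by passing to a birational model or by restricting to the case where the section rings $R(D_i)$ are finitely generated --- and that the number of rational polyhedral cones of bounded complexity is countable.

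First I would pin down the combinatorial invariant. To an irreducible projective $X$ of dimension $n$ with $\dim_\R N^1(X)_\R=\rho$, together with an ordered tuple of divisors $D_1,\dots,D_\rho$ whose classes form a basis of $N^1(X)_\R$, one can (after choosing a flag on $X$, or on a modification) attach the global Okounkov cone $\Delta=\Delta(X;D_1,\dots,D_\rho)\subseteq\R^n_{\ge 0}\times\R^\rho$. The crucial reduction is: the volume function in the coordinates given by $(D_1,\dots,D_\rho)$ is then $\mathbf{a}\mapsto n!\cdot\mathrm{vol}_{\R^n}\big(\Delta\cap(\R^n\times\{\mathbf{a}\})\big)$, i.e.\ it is completely determined by $\Delta$. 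So it suffices to show that there are only countably many possibilities for $\Delta$ up to $\mathrm{GL}_\rho(\Z)$-equivalence on the second factor. If $\Delta$ were always rational polyhedral this would be immediate --- rational polyhedral cones in a fixed $\R^{n+\rho}$ form a countable set --- and one then packages the resulting countably many functions as the $f_j$, defining $\rho_X:V_\R\to N^1(X)_\R$ to be the integral isomorphism sending the standard basis of $\Z^\rho$ to the classes of $D_1,\dots,D_\rho$ for a tuple realizing the $j$-th cone.

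The step I expect to be the main obstacle is that the global Okounkov cone need \emph{not} be rational polyhedral --- indeed the final example of the paper shows $\textup{vol}_X$ can be transcendental, so one cannot hope for polyhedrality in general. The fix is not to use polyhedrality but countability of a coarser sort: one needs a countable set of "admissible" cones that is guaranteed to contain $\Delta(X;D_1,\dots,D_\rho)$ for \emph{every} $X$ and every basis-tuple. The natural candidate is the set of all closed convex cones in $\R^{n+\rho}$ that arise as $\Delta(Y;E_1,\dots,E_\rho)$ for \emph{some} $(Y;E_\bullet)$; a priori this could be uncountable, so the real content is a boundedness/finiteness statement --- e.g.\ that for fixed $n,\rho$ these cones are defined by countably many "moduli," or that they are determined by the section ring truncated in a bounded multidegree range, which varies in a countable family. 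Concretely I would argue: (i) $\Delta$ is determined by the graded pieces $H^0(X,\mathcal O_X(\mathbf mD))$ together with the valuation data, (ii) by a Noetherian/finite-generation argument the cone $\Delta$ is generated "in the limit" by a countable amount of data lying in $\Q^{n+\rho}$ (the valuations of sections are integer vectors), so $\Delta$ is the closed convex hull of a countable set of rational points in a fixed space, and (iii) there are only countably many closed convex hulls of subsets of $\Q^{n+\rho}$.

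Assembling: enumerate the countably many cones $\Delta_j\subseteq\R^{n+\rho}$ thus obtained, set $f_j(\mathbf a)=n!\cdot\mathrm{vol}_{\R^n}(\Delta_j\cap(\R^n\times\{\mathbf a\}))$ for $\mathbf a\in V_\R=\R^\rho$ (so $f_j:V_\R\to\R_+$ is continuous, homogeneous and log-concave by the same convexity argument as in \cite{LM08}), and for a given $X$ choose any ordered basis $D_1,\dots,D_\rho$ of $N^1(X)_\R$ by integral classes, let $j$ be the index with $\Delta(X;D_\bullet)=\Delta_j$, and define $\rho_X:V_\R\to N^1(X)_\R$ by sending the standard lattice basis of $V_\Z=\Z^\rho$ to $[D_1],\dots,[D_\rho]$. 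Then $\rho_X$ is an integral linear isomorphism and $\textup{vol}_X\circ\rho_X=f_j$ by the Okounkov-cone formula. The one genuinely delicate point to write carefully is step (ii): that the global Okounkov cone is the closure of the convex hull of a \emph{countable} rational set independent of continuous parameters --- this is where the proof must either invoke finite generation on a well-chosen model, or argue directly that the valuation semigroup, being a subsemigroup of $\N^{n+\rho}$, ranges over only countably many possibilities.
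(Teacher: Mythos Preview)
Your approach is genuinely different from the paper's, and it has a real gap at exactly the point you flag as ``delicate.''

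Step (iii) is simply false: there are uncountably many closed convex cones in $\R^{n+\rho}$ that arise as closed convex hulls of subsets of $\Q^{n+\rho}$. For instance, in $\R^2$ the half-plane $\{y\le \alpha x\}$ with $\alpha$ irrational is the closure of its rational points, and distinct $\alpha$ give distinct cones. Likewise the alternative you suggest --- counting subsemigroups of $\N^{n+\rho}$ --- fails: for any subset $S\subseteq\N$ the set $\{(0,0)\}\cup\{(a,b):a\ge 2\}\cup\{(1,s):s\in S\}$ is a subsemigroup of $\N^2$, so there are uncountably many. More to the point, Section~1 of the paper (Proposition~\ref{multigraded okounkov bodies}) shows that \emph{every} closed convex cone with nonempty interior inside the global cone of a toric variety is the Okounkov cone of some multigraded linear series. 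So the Okounkov cone alone cannot separate the complete case from the non-complete case, and your argument nowhere uses completeness. The suggestion to ``invoke finite generation on a well-chosen model'' is not available either: section rings need not be finitely generated, and the transcendental example in Section~3 shows the global Okounkov cone can genuinely fail to be rational polyhedral.

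The paper's proof goes by an entirely different route, avoiding Okounkov bodies altogether. One first reduces to smooth $X$ by resolution of singularities, then embeds every smooth $X$ of dimension $n$ and Picard number $\rho$ diagonally into $Y=(\P^{2n+1})^\rho$ so that the restriction $N^1(Y)_\R\to N^1(X)_\R$ is an integral isomorphism. The multigraded Hilbert schemes of $Y$ then organise all such $X$ into \emph{countably many} flat families. On each family one shows (Proposition~\ref{linear independent}) that linear independence of the restricted divisor classes persists, and then uses semicontinuity of $h^0$ to conclude that $t\mapsto \textup{vol}_{X_t}(D|_{X_t})$ is constant off a countable union of proper closed subsets of the base; Noetherian induction on $\dim T$ finishes. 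The countability thus comes from Hilbert schemes and semicontinuity, not from any discreteness of the Okounkov data.
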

\noindent
We prove Theorem B in the case of smooth varieties. The general case follows easily by appealing to resolution of singularities. The heart of the proof is a careful analysis of  the variation of the volume function in families coming from multi-graded Hilbert schemes. This approach enables us to establish analogous statements for the ample, nef, big, and pseudoeffective cones. We would like to point out that the countability of ample or nef cones also follows from the work of Campana and Peternell \cite{CP90} on the algebraicity of these cones. 

An amusing application of Theorem B concerns the set of volumes $\mathbb{V}\subseteq\R_+$, which is the set of all non-negative real numbers arising
 as the volume of a Cartier divisor on some irreducible projective variety. Using Theorem B, one can deduce that $\mathbb{V}$ has the structure of a countable
multiplicative semigroup (cf.~Remark \ref{semigroup of volumes}). By contrast, in the last section we give an example of a four-fold whose volume function is
given by a transcendental function, deepening further the mystery surrounding the volume function in the classical case. In particular, the same example
provides a Cartier divisor with transcendental volume, thus the set of volumes $\mathbb{V}$ contains  transcendental numbers as well.

\subsection*{Acknowledgments.} Part of this work was done while the first and the second authors were enjoying the hospitality of the Universit\'e Joseph
Fourier in Grenoble. We would like to use this opportunity to thank Michel Brion and the Department of Mathematics for the invitation. We are grateful to
Sebastien Boucksom, Rob Lazarsfeld, and Mircea Musta\c t\u a for many helpful discussions. Special thanks are due to an anonymous referee for suggestions
leading to significant expository improvements and notable strengthening of
the results of section 1.


\section{Volume functions of non-complete linear series}

In this section we study the volume function of a multigraded linear series and  verify that
any non-zero continuous homogeneous log-concave function appears
 as the volume function of some multigraded linear series on a smooth 
projective toric variety of dimension $n$.

First we  introduce some notation. In the course of this section $X$ will be a smooth projective toric variety of dimension $n$ with an action of a torus $T=N\otimes_{\Z}\C^*$. Denote by $M\simeq\Z^n$ the character lattice of $T$ and assume the existence of a fixed isomorphism $\alpha: M\rightarrow \Z^n$. In particular, the Euclidean volume of an open set in $M_\R$ is well-defined.

For any choice $\mathbf{D}=(D_1,\dots,D_\rho)$ of (not necessarily different) 
toric Cartier divisors on X define
\[
 \Gamma(\mathbf{D}) \deq\st{(u,\mathbf{m})\in M\times\N^\rho\mid 
\chi^u\in\HH{0}{X}{\OO_X(\mathbf{m}\mathbf{D})}}\ .
\]
Note that $\Gamma(\mathbf{D})$ is finitely generated, and spans a 
polyhedral cone 
$\Delta(\mathbf{D})$ in $M_\R\times\R_+^\rho$. Note moreover that if $k$
is a natural number then
\[ \Delta(k\mathbf{D})=\{ (kv, w)| (v,w)\in\Delta(\mathbf{D})\}.\]

\begin{theorem} \label{noncomplete volume function}
Let $K\subseteq \R^\rho_+$ 
be a closed convex cone with non-empty interior, $f:K\rightarrow \R_+$ 
be a continuous function, which is non-zero, homogeneous and
log-concave of degree $n$ in the interior of $K$. Then there exists a 
multi-divisor $\mathbf{D}=(D_1,\ldots,D_\rho)$, and  a multigraded linear series 
$W_{\bullet}$ on $X$ with $
W_{\mathbf{m}} \ \subseteq \ H^0(X,\mathcal{O}_X(\mathbf{m}\mathbf{D})))$
for any $\mathbf{m}\in\N^{\rho}$, such that $\textup{supp}(W_{\bullet})=K$ and 
$\textup{vol}_{W_{\bullet}}= f$ on the interior of $K$.
\end{theorem}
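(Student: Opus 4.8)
The plan is to reduce the statement to two separate problems in convex geometry, linking them through the Okounkov construction of Lazarsfeld–Mustaţă. The first problem is to realize a prescribed convex cone as the Okounkov cone of a multigraded linear series on $X$; the second is to recognize that any function $f$ as in the hypothesis is, up to a positive multiplicative constant, the Euclidean volume function of slices of a suitable cone in $\R^n_+ \times \R^\rho_+$.

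First I would carry out the convex-geometry construction: given $f : K \to \R_+$, form the closed convex cone
\[
\widehat{\Gamma}_f \ \deq \ \overline{\bigl\{\, (\mathbf{x}, \mathbf{m}) \in \R^n_+ \times \R^\rho_+ \ : \ \mathbf{m} \in \textup{int}(K), \ \mathbf{x} \in \tfrac{1}{n!}\,\Delta_{\mathbf{m}} \,\bigr\}}\ ,
\]
where the slice $\Delta_{\mathbf m}$ over each $\mathbf m \in \textup{int}(K)$ is chosen so that its Euclidean volume equals $\vol_{\R^n}(\Delta_{\mathbf{m}}) = f(\mathbf{m})$; concretely, one can take the $\Delta_{\mathbf m}$ to be simplices or rescaled copies of a fixed convex body, where the requisite convexity of $\widehat\Gamma_f$ as a subset of $\R^{n+\rho}$ follows from the log-concavity and homogeneity of $f$ via a Brunn–Minkowski-type computation (this is the step I would check most carefully). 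Homogeneity of $f$ guarantees $\widehat\Gamma_f$ is a genuine cone, and non-vanishing in the interior of $K$ guarantees it has nonempty interior; one should also arrange that its projection to the last $\rho$ coordinates is exactly $K$, and that the cone is pointed, possibly after a linear shear in the $\R^n$ factor.

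Next I would invoke the realization result alluded to in the introduction: any pointed cone in $\R^n_+ \times \R^\rho_+$ with nonempty interior arises, after a suitable rescaling of coordinates, as the Okounkov cone of a multigraded linear series $W_\bullet$ on the fixed toric variety $X$ with $W_{\mathbf m} \subseteq H^0(X, \OO_X(\rho_X(\mathbf{m})))$ and $\textup{supp}(W_\bullet) = K$. Since on a toric variety $X$ with torus-fixed flag the Okounkov body of each multidegree is precisely the corresponding slice of the Okounkov cone, and since by \cite{LM08} the volume $\vol_{W_\bullet}(\mathbf{m})$ equals $n!$ times the Euclidean volume of that slice of the Okounkov cone, we obtain $\vol_{W_\bullet}(\mathbf{m}) = c \cdot f(\mathbf{m})$ on $\textup{int}(K)$, where the constant $c$ absorbs the rescaling needed in the realization step. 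Continuity of both sides on $\textup{int}(K)$ — guaranteed for $\vol_{W_\bullet}$ by the cited results and for $f$ by hypothesis — then upgrades the equality from the rational points where it is manifest to all of $\textup{int}(K)$.

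The main obstacle I anticipate is the first step: constructing the cone $\widehat\Gamma_f$ so that it is simultaneously (i) convex in $\R^{n+\rho}$, (ii) has the prescribed slice volumes $f(\mathbf m)$, and (iii) has prescribed projection $K$ and is pointed. Convexity is the delicate point, because it is a statement about how the bodies $\Delta_{\mathbf m}$ vary with $\mathbf m$, not just about each individual body; making this work forces one to choose the $\Delta_{\mathbf m}$ coherently — e.g. as dilates $\lambda(\mathbf m)\,\Delta_0$ of a fixed body with $\lambda$ a suitable $1/n$-concave function, or as fibers of an explicitly described convex region — and to verify that log-concavity of $f$ is exactly the condition that renders the total region convex. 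Once this is in place, the passage through \cite{LM08} and the continuity argument are essentially formal.
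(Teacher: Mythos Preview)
Your proposal is correct and follows essentially the same two-step strategy as the paper: first realize $f$ as the slice-volume function of a convex cone in $\R^n_+\times\R^\rho_+$ (the paper does this with balls of radius $(f(v)/C_n)^{1/n}$ translated by a linear map $g$, which is exactly your ``dilates of a fixed body plus a linear shear''), and then realize that cone---after shrinking in the $\R^n$ direction so it sits inside $\Delta_{Y_\bullet}(V_\bullet)$---as the Okounkov cone of a toric linear subseries $W_\bullet\subseteq V_\bullet$, whence the volume identity follows from \cite{LM08}. The only point where the paper is more explicit than your sketch is the realization step: rather than invoking it as a black box, the paper constructs $W_{\mathbf m}$ directly as the span of the torus characters $\chi^u$ with $(\nu_{Y_\bullet}(\chi^u),\mathbf m)$ lying in the given cone, and checks that the resulting semigroup is saturated enough to recover the full cone.
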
 

\begin{remark}\label{nowhere differentiable}
It is relatively easy to use Theorem~\ref{noncomplete volume function} to produce examples of linear series whose volume function is not very regular. Indeed, functions as in Theorem~\ref{noncomplete volume function} can be constructed from continuous concave functions $g:B\rightarrow \R_+$ defined on a bounded convex body  $B\subseteq \R^{\rho -1}$. For this let $H\subseteq \R_+^{\rho}$ be an affine hyperplane, not containing the origin, such that $H\cap K=B$ 
is bounded. The function
\[
g \deq  \sqrt[n]{f} \ : \ B \rightarrow \R_+
\]
then extends uniquely to a function on $K$ satisfying the hypotheses of 
Theorem~\ref{noncomplete volume function}. In dimension one for example, continuous concave functions can be generated from arbitrary negative bounded continuous functions by integrating twice. So taking a negative, bounded and nowhere differentiable continuous function defined on a closed interval, and integrating it twice,  we obtain a continuous, concave and nowhere three 
times differentiable function.
\end{remark}


\begin{proof}[Proof of Theorem~\ref{noncomplete volume function}]
We consider a toric variety $X$, together with a multi-divisor 
$\mathbf{D}=(D_1,\ldots, D_\rho)$. 
Throughout what follows, we will say that a cone $C\subseteq M_\R\times \R^\rho_+$ is   \emph{$M_\R$-bounded}, if 
 for any $p\in \R^\rho_+$ the volume of the slice 
$\{v\in M_\R \ |\  (v,p)\in C\}$ is finite.

For any $M_\R$-bounded cone $C$ let ${\rm vol}_C: \R_+^\rho\rightarrow \R$ be the function given by the formula
\[
{\rm vol}_C(p) \ = \ {\rm vol}(\{v\in M_\R| (v,p)\in C\}), \text{ for any } p\in\R^{\rho}_+
\]
In the next lemma we reduce to the case where $f=n!{\rm vol}_C$
for some closed convex cone $C$. 
\begin{lemma}\label{lemma:function}
If $f:K \longrightarrow \R_+$ is a function as in Theorem A, then 
there exists an $M_\R$-bounded closed convex 
cone in $C\subseteq M_\R\times K$ such
that 
$${\rm vol}_C(v) \ = \ f(v)/n! $$ for all $v\in \textup{int}(K)$.
\end{lemma}
\begin{proof}
Using the isomorphism $\alpha$, it will be enough to construct $C$ in
$\R^n\times K$. The cone
\[
 C\deq \st{(v,p)\in\R^n_+\times K\mid \sum_i{v_i} \leq (f(p))^{1/n}}
\]
has the required properties. 
\end{proof}
From now on, we assume given an $M_ \R$-bounded 
cone $C\subset  M_\R\times K$ such that 
${\rm vol}_C= f$ on ${\rm int}(K)$.  
The following lemma proves the theorem in the case where 
$C\subseteq \Delta(\mathbf{D})$.
\begin{lemma}\label{subcone}
Let $X$, $\mathbf{D}$ and $C$ be as above, and
assume that the multi-divisor $\mathbf{D}$ is such that 
$C\subseteq \Delta(\mathbf{D})$. 
There is then a multigraded linear series $W_\bullet$ with
$W_{\mathbf{m}}\subset H^0(\mathcal{O}_X(\mathbf{m}\mathbf{D}))$ such that 
${\rm vol}(W_{\bullet})= n!{\rm vol}_C $ on ${\rm int}(K)$. 
\end{lemma}
\begin{proof}
We define $W_{\bullet}$ as follows. 
For any $\mathbf{m}\in\N^\rho$ we set 
\[
 W_{\mathbf{m}}\deq\langle \chi^u\in \HH{0}{X}{\OO_X(\mathbf{m}\mathbf{D})}\mid u\in C\cap (M\times\st{\mathbf{m}}) \rangle \ .
\]
By construction $W_{\mathbf{m}} \cdot W_{\mathbf{n}} \subseteq W_{\mathbf{m+n}}$ for all $\mathbf{m},\mathbf{n}\in\N^\rho$ and 
\[
 \dim W_{\mathbf{m}} \equ \# \big(C\cap (M\times\st{\mathbf{m}})\big)\ ,
\]
hence $\vol_{W_\bullet}(\mathbf{m})\equ n!\vol_{C}(\mathbf{m})$. This completes the proof of Lemma \ref{subcone}.
\end{proof}
Note in particular that if $C$ is the cone whose existence is guaranteed by 
Lemma \ref{lemma:function} then we have that $\vol_{W_\bullet}= f$ on 
${\rm int}(K)$. 
The function 
\[
\textup{vol}_{W_{\bullet}}:\textup{int(supp}(W_{\bullet})) \rightarrow \R_+
\]
is proportional to the $\textup{vol}_C$. It is therefore continuous, homogeneous and log-concave of degree $n$ in the interior of $\textup{supp}(W_{\bullet})$. Since $W_{\bullet}$ is a subseries of the complete multigraded linear series defined by $\mathbf{D}$,  the function $(\textup{vol}_{W_{\bullet}})^{1/n}$ is bounded in the sense that
\[
\big(\textup{vol}_{W_{\bullet}}(v)\big)^{1/n} \ \leq \ k_1||v||, \textup{ for all } v\in \textup{supp}(W_{\bullet})
\]
for some $k_1>0$. The concavity of the function
$(\textup{vol}_{W_{\bullet}})^{1/n}$, implies 
that it satisfies a H\"older
condition of exponent  $1$ (see  \cite[Theorem 1.5.1]{Sch93})
\[
|\big(\textup{vol}_{W_{\bullet}}(v)\big)^{1/n}-\big(\textup{vol}_{W_{\bullet}}(w)\big)^{1/n}| \ \leq \ k_2||v-w||
\]
for all $v,w\in \textup{int(supp}(W_{\bullet}))$. These conditions imply that the
function $\textup{vol}_{W_{\bullet}}$ 
can be extended continuously to the whole support of $W_{\bullet}$.
\noindent
To complete the proof of the theorem, it will be enough to establish the 
following lemma.
\begin{lemma}\label{scaling}
Let $C$ be an $M_\R$-bounded closed convex  
subcone of $M_\R\times K$ and let $\mathbf{D}=(D_1,\ldots, D_\rho)$ be 
a choice of big Cartier divisors on $X$. There is then
a linear transformation $\phi: M_\R\times \R^\rho\rightarrow M_\R\times \R^\rho$ 
and an integer $k$ such that
\begin{enumerate}
\item $\phi(b,0) = (b, 0)$ and $\phi(b,a)= (*,a)$ for all $b\in M_\R$ and $a\in\R^\rho$.
\item $\phi(C)\subset \Delta(k\mathbf{D})$.
\end{enumerate}
\end{lemma}
\begin{proof}
For every $i$, we consider the set $B_i=\{w\in M_\R| (w, v_i)\in \Delta(\mathbf{D})\}$, where $v_i$ is the $i$th unit vector. The fact that $D_i$ is big implies that $B_i$ has non-empty interior. For each $i$ we pick an element $d_i$ in the interior of $B_i$ and we consider the map $\psi: \R^\rho\rightarrow M_\R$ given by
\[
\psi(a_1,\ldots, a_\rho)= \sum a_i d_i.
\]
This map has the property that for any $\underline{a}\in\R_+^\rho$, $(\psi(\underline{a}),\underline{a})$ is contained in the interiorof  $\Delta(\mathbf{D})$. There is a $\delta>0$ such that for any $\underline{a}\in \R^\rho_+$ and any $\underline{b}\in M_\R$ we have that 
\[
||\underline{b}||< \delta ||\underline{a}||\Rightarrow (\psi(\underline{a})+\underline{b}, \underline{a})\in\Delta(\mathbf{D}).
\] 
The cone $C$ being $M_\R$-bounded, there is a  $l$ such that $(v,w)\in C\Rightarrow ||v||\leq l || w ||$. We choose an integer $k>l/\delta$ and consider the map 
\[ 
\phi(v,w) = (v +k\psi(w),w).
\]
Suppose that $(v,w)\in C$. We then have that  
\[ 
||v/k|| < || \delta v/l || < \delta || w|
|\]
which implies that 
\[ 
(v/k+\psi(w), w)\in \Delta(\mathbf{D})
\]
and hence
\[
 \phi(v,w)= (v+k\psi(w), w)\in \Delta(k\mathbf{D}).
 \]
This completes the proof of Lemma \ref{scaling}. 
\end{proof}
The proof of Theorem 1.1 is now complete. Given a function $f$ satisfying the 
hypotheses of Theorem 1.1, we can find an $M_\R$-bounded closed convex cone $C$
such that ${\rm vol}_C=f$. After applying Lemma \ref{scaling} (and replacing
$C$ by $\phi(C)$ if necessary) we can assume there is a multi-divisor 
$\mathbf{D}$ such that $C\subset \Delta(\mathbf{D})$. By Lemma 
\ref{subcone} we obtain a linear series $W_\bullet$ such that 
\[{\rm vol}(W_\bullet)= n!{\rm vol}_C=f\]
on ${\rm Int}(K)$. 
\end{proof}

\begin{remark}
Interestingly enough, Theorem~\ref{noncomplete volume function} proves significant already  in the simplest meaningful case, when $X=\P^1$.  More specifically,
let $f:\R^r_+\to\R_+$ be a continuous, concave, and $1$-homogeneous function. After possibly scaling $f$, we can assume that $f(x)\leq \sum_{i}x_i$. Now, for each $\mathbf{m}\in\N^r$, define 
\[
 W_\mathbf{m} \deq \langle \chi^u\mid 0\leq u\leq f(\mathbf{m})\rangle \subseteq \HH{0}{\P^1}{\mathcal{O}(|\mathbf{m}|)}\ .
\]
Then $\dim (W_{\mathbf{m}}) \equ f(\mathbf{m})+1$  and we have $\vol_{W_\bullet} = f$ on $\R^r_+$. 
\end{remark}

\section{Countability of volume functions for complete linear series.}

One of the consequences of the previous section is that for non-complete multigraded linear series there are uncountably many different volume functions. By contrast, we will prove that there are only countably many volume functions for all irreducible projective varieties. 

\begin{theorem}\label{Theorem}
Let $V_{\Z} =\Z^{\rho}$ be a lattice inside the vector space $V_{\R}$. Then 
there exist countably many closed convex cones $A_i\ \subseteq \ V_{\R}$ and functions 
$f_j:V_{\R}\rightarrow \R$ with $i, j\in\N$, so that for any smooth 
projective variety $X$ of dimension $n$ and Picard number $\rho$, 
we can construct an integral linear isomorphism
\[
\rho_{X} : \ V_{\R} \ \rightarrow \ N^1(X)_{\R}
\]
with the properties that
\[
\pi^{-1}_{X}(\Nef) \ = \ A_i, \text{  and   } \textup{vol}_X \ \circ \ \pi_X \ = 
\ f_j
\]
for some $i,j\in\N$.
\end{theorem}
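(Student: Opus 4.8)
The plan is to parametrize smooth projective varieties of dimension $n$ and Picard number $\rho$ by countably many families, and to show that the volume function (transported to $V_\R$ via a suitable isomorphism) is \emph{constant} along each family. First I would reduce to a bounded situation: fixing a very ample polarization amounts to embedding $X$ into some $\P^N$ with fixed Hilbert polynomial, and by the theory of multigraded Hilbert schemes (Haiman--Sturmfels), the pairs $(X, (L_1,\dots,L_\rho))$ with $L_i$ a chosen $\Z$-basis of $\Pic(X)$, all of whose relevant numerical data (Hilbert polynomials of all monomials $\mathbf m L$) are fixed, are parametrized by a single quasi-projective scheme $\mathcal H$ of finite type over $\C$. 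There are only countably many choices of discrete data (Hilbert polynomials, degrees of embedding), so it suffices to treat one such $\mathcal H$, and one may further stratify and pass to a connected — indeed irreducible, after base change — component, of which there are again only countably many.

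Next I would use the isomorphism $\rho_X$ supplied by the basis $(L_1,\dots,L_\rho)$: it sends the standard lattice $\Z^\rho\subseteq V_\R$ to $N^1(X)_\Z$, and it is integral by construction. The point is that over a connected base the lattice $N^1(X_t)_\Z$ together with its intersection-theoretic structure (all intersection numbers $L_{i_1}\cdots L_{i_n}$, and more generally the Hilbert polynomial data) is \emph{locally constant}, because these are locally constant functions on the base by flatness and the invariance of Euler characteristics in flat families; hence, after shrinking to an irreducible component they are genuinely constant. Therefore $\rho_X^{-1}(\Nef)$ depends only on the component — giving the countably many cones $A_i$, with the Campana--Peternell algebraicity of the nef cone as an alternative route — and likewise the discrete invariants entering $\vol_X$ are constant along the component.

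The heart of the argument, and the main obstacle, is to upgrade this to the statement that $\vol_{X_t}\circ\rho_{X_t}$ is literally the \emph{same function} on $V_\R$ for all $t$ in a connected family. The subtlety is that $\vol_X(\xi)$ for $\xi$ in the big but not nef locus is \emph{not} a single intersection number; it is a limit, and a priori it could jump. I would handle this by the following: (i) on the nef cone (constant by the above) the volume is the top self-intersection $\xi^n$, a polynomial with constant coefficients, hence constant in the family; (ii) in general, pass to a relative Fujita approximation / use Zariski-decomposition-type estimates, or better, invoke semicontinuity — $\vol_{X_t}(\xi)$ is at least lower semicontinuous in $t$ because sections deform, and one bounds it above using the constancy of $h^0$ of the ambient complete linear series — to conclude that $t\mapsto \vol_{X_t}(\xi)$ is both upper and lower semicontinuous, hence constant, on a connected base, for each fixed $\xi\in\Z^\rho$; then continuity of $\vol$ in $\xi$ extends this to all of $V_\R$. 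Collecting: the countable set of irreducible components of the countably many multigraded Hilbert schemes indexes the functions $f_j$ and cones $A_i$, and every smooth $X$ of the right invariants lies on one of them, with $\rho_X$ the associated basis isomorphism. Finally I would remark that the general (possibly singular) case of the theorem follows by taking a resolution $\widetilde X\to X$ and using that $\vol_X(D)=\vol_{\widetilde X}(\pi^*D)$ together with the fact that $\pi^*$ embeds $N^1(X)_\R$ integrally into $N^1(\widetilde X)_\R$, so one reduces to finitely many "slices" of the smooth classification.
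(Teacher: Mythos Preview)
Your overall architecture---embed in a product of projective spaces, parametrize by countably many multigraded Hilbert schemes, and argue family by family---is exactly the paper's. The gap is in the ``heart of the argument'': you assert that $\rho_{X_t}^{-1}(\textup{Nef}(X_t)_\R)$ and $\textup{vol}_{X_t}\circ\rho_{X_t}$ are constant on each connected (or irreducible) component of the parameter space, and that is not true and not what the paper proves.

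For the nef cone, constancy of the intersection numbers $L_{i_1}\cdots L_{i_n}$ does \emph{not} determine $\textup{Nef}(X_t)_\R$: nefness is tested against all effective curves, and the set of effective curve classes can change along proper closed loci of the base. The correct statement (used in the paper via \cite[Theorem 1.2.17]{Laz04}) is that if $D|_{X_{t_0}}$ is nef then $D|_{X_t}$ is nef for $t$ outside a \emph{countable union} of proper closed subsets depending on $t_0$. For the volume, your semicontinuity sketch breaks down: $h^0(X_t,\mathcal O(dD))$ is \emph{upper} semicontinuous in $t$, so it is constant only on a dense open $T\setminus B_{D,d}$ and can jump up on $B_{D,d}$; there is no ``constancy of $h^0$ of the ambient complete linear series'' to bound it from above, and the volume can genuinely differ on special fibres. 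In particular neither invariant is locally constant on an irreducible base.

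What the paper actually does is weaker fibrewise but sufficient globally: it shows that on each irreducible base $T$ there is a countable union $F\subsetneq T$ of proper closed subsets such that $A_t$ and $f_t$ are \emph{independent of} $t\in T\setminus F$ (for the nef cone this uses the result above plus a second-countability/covering trick to pass from single divisors to the whole cone; for the volume it uses semicontinuity for each pair $(D,d)$). One then recurses on $\dim T$ by restricting the family to each irreducible component of each $F_m$. This Noetherian induction over countably many proper closed strata is the step your proposal is missing; replacing ``constant on connected components'' by ``constant on the very general fibre, then induct on $\dim T$'' would repair the argument.
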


\begin{remark}\label{irreducible varieties}
$(1)$ Theorem~\ref{Theorem} quickly implies Theorem B: let $X$ be an irreducible projective variety  and let $\mu :X'\rightarrow X$ be a resolution of singularities of $X$. The pullback map
\[
\mu^* \ : \ N^1(X)_{\R} \rightarrow N^1(X')_{\R}
\]
is linear, injective,  and $\textup{vol}_{X}=\textup{vol}_{X'}\circ \mu^*$ by \cite[Example 2.2.49]{Laz04}. Since the map $\mu^*$ is defined by choosing dim$(N^1(X)_{\R})$ integral vectors, the countability of the volume functions in the smooth case implies that the same statement is valid for the collection of irreducible varieties. As  $\textup{Nef}(X)_{\R}=(\mu^*)^{-1}(\textup{Nef}(X')_{\R})$, the same statement holds for nef  cones.\\
$(2)$ Since $\Amp=\textup{int}(\Nef) $, then Theorem B remains valid for  ample cones as well. Much the same way,  the  cone of big divisors can be described as 
\[
\textup{Big}(X)_{\R}\ = \ \{ D\in N^1(X)_{\R} \ | \ \textup{vol}_X(D) 
\ > \ 0 \ \}\ ;
\]
its  closure is known to be equal to the pseudo-effective cone $\overline{\textup{Eff}(X)}_{\R}$. Hence we conclude that Theorem B is also valid for the big and pseudoeffective cones.
\end{remark}
\begin{remark}[The semigroup of volumes]\label{semigroup of volumes}
Let
\[
\mathbb{V} \deq \{ a\in\R_+ \ | \ a=\textup{vol}_X(D) \textup{ for some pair } (X,D)\}
\] 
where $X$ is some  irreducible projective variety and $D$ a Cartier divisor on $X$. By Theorem~\ref{Theorem},  $\mathbb{V}$ is countable. Moreover, using the K\"unneth formula \cite[Proposition 4.5]{BKS}, one can show that the set $\mathbb{V}$ has the  structure of a multiplicative semigroup with respect to the product of real numbers. Beyond this fact very little is known about $\mathbb{V}$. It is certainly true by \cite[Example 2.3.6]{Laz04} and the semigroup structure of  $\mathbb{V}$  that all non-negative  rational numbers are contained in $\mathbb{V}$. At the same time we do not know whether all algebraic numbers appear as volumes of Cartier divisors. Going in the other direction,  we provide  an example in Section $3$ of a pair $(X,D)$ such that the volume $\textup{vol}_X(D)$ is transcendental.
\end{remark}

We will make  some preparations. Let $\phi:\mathcal{X}\rightarrow T$ be a smooth projective and surjective morphism of relative dimension $n$ between two quasi-projective varieties. Suppose further that $T$ and each fiber of $\phi$ is irreducible and reduced. If we are given $\rho$ Cartier divisors $D_1,\ldots ,D_{\rho}$ on $\mathcal{X}$ then we say that a closed point $t_0\in T$ admits a \emph{good fiber} if $D_1|_{X_{t_0}},\ldots ,D_{\rho}|_{X_{t_0}}$ form a basis for the N\'eron--Severi space $\text{N}^1(X_{t_0})_{\R}$. The main ingredient of  the proof of Theorem~\ref{Theorem} is the following statement.

\begin{proposition}\label{linear independent}
Let $\phi:\mathcal{X}\rightarrow T$ be a family as above and suppose that there exists a closed point $t_0\in T$, admitting a good fiber. Then for all closed points $t\in T$ the Cartier divisors $D_1|_{X_t},\ldots ,D_{\rho}|_{X_t}$ are linear independent in $N^1(X_t)_{\R}$.
\end{proposition}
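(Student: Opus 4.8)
The plan is to exploit semicontinuity of the Picard number together with the fact that it is actually locally constant for a smooth projective family, and then to control the intersection pairing so that the linear independence of the restricted classes can be detected by a numerical (hence topologically rigid) condition. First I would recall that for a smooth projective morphism $\phi:\mathcal{X}\to T$ with $T$ and all fibers irreducible and reduced, the groups $N^1(X_t)_{\R}$ fit together in a way that is governed by the local systems $R^i\phi_*\Z$ and by the variation of Hodge structure on $R^2\phi_*\Z$; in particular the sublattice of $(1,1)$-classes can only jump up on a countable union of proper closed "Noether--Lefschetz" loci, and the restriction maps $\mathrm{N}^1(\mathcal{X})_\R \to \mathrm{N}^1(X_t)_\R$ are compatible with these. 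The divisors $D_1,\dots,D_\rho$ live on the total space $\mathcal{X}$, so their restrictions $D_i|_{X_t}$ are all simultaneously pulled back from a single fixed collection of classes.

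The second and main step is the following rigidity argument. Linear independence of $D_1|_{X_t},\dots,D_\rho|_{X_t}$ in $N^1(X_t)_\R$ is equivalent to the non-vanishing of a certain determinant of intersection numbers: choose an auxiliary very ample divisor $A$ on $\mathcal{X}$ relative to $T$, and consider the $\rho\times\rho$ Gram-type matrix whose $(i,j)$ entry is the intersection number $\big(D_i|_{X_t}\cdot D_j|_{X_t}\cdot (A|_{X_t})^{n-2}\big)$ computed on $X_t$. These intersection numbers are \emph{constant} in $t$, being degrees of classes that are restrictions from $\mathcal{X}$ and hence deformation invariant (intersection numbers are locally constant in flat families, or alternatively they are topological — cup products of Chern classes integrated against the fundamental class of a fiber, which is constant in the connected base $T$). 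Therefore if this determinant is nonzero for the good fiber $t_0$, it is nonzero for every $t$. Finally I would observe that at $t=t_0$ the classes $D_i|_{X_{t_0}}$ form a basis, so in particular they are linearly independent; non-degeneracy of the intersection pairing restricted to their span — which is what allows the determinant above to be taken nonzero for a suitable choice of $A$, by the Hodge index theorem / hard Lefschetz for the pairing $(-\cdot - \cdot A^{n-2})$ — gives the required nonvanishing at $t_0$, and the constancy argument propagates it to all $t$.

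I expect the main obstacle to be making precise the claim that linear independence of the restricted classes is detected by a determinant of intersection numbers that is genuinely constant in the family. One has to be careful that the intersection numbers $\big(D_i\cdot D_j\cdot A^{n-2}\big)_{X_t}$ really are independent of $t$: this follows because $D_i$, $D_j$, $A$ are honest divisors on $\mathcal{X}$, so the relevant number is $\int_{\mathcal{X}}[D_i]\cup[D_j]\cup[A]^{n-2}\cup[X_t]$ with $[X_t]\in H^{2(\dim T)}(\mathcal{X})$ independent of the closed point $t$ since $T$ is connected (all fibers are algebraically, hence homologically, equivalent) — but one should check that $\mathcal{X}$ quasi-projective rather than projective does not cause trouble, which is handled by compactifying or by working with a relatively projective $\phi$ and the relative intersection theory. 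The other delicate point is choosing $A$ so that the pairing $(x,y)\mapsto (x\cdot y\cdot A^{n-2})_{X_{t_0}}$ is nondegenerate on the $\rho$-dimensional span of the $D_i|_{X_{t_0}}$: this is where the Hodge index theorem enters, and it requires $A$ to be chosen ample on the good fiber, which we may arrange after replacing $A$ by a large multiple plus a relatively ample class. Once these two points are in place, the propagation from $t_0$ to all $t$ is immediate.
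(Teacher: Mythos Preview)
Your argument is correct and takes a genuinely different route from the paper's. The paper reduces to a single divisor $D$ on $\mathcal{X}$ with $D|_{X_{t_0}}\not\equiv_{\mathrm{num}}0$ and shows $D|_{X_t}\not\equiv_{\mathrm{num}}0$ for every $t$ by \emph{induction on the fiber dimension}: one slices by a general member $W\in|A|$ of a relatively very ample system, uses Bertini to make $W_t$ smooth and irreducible on a common open set containing $t_0$ and the target point $t_1$, and applies the inductive hypothesis to $D|_{W}$; the Hodge index theorem (in Kleiman's higher-dimensional form) enters only to rule out the boundary case $D|_{W_{t_0}}\equiv_{\mathrm{num}}0$. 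Your approach bypasses the induction and Bertini altogether by packaging linear independence into the nonvanishing of the Gram determinant $\det\big((D_i\cdot D_j\cdot A^{n-2})_{X_t}\big)_{i,j}$, whose entries are constant in $t$ by flatness, and which is nonzero at $t_0$ because the Hodge index signature $(1,\rho-1)$ makes the pairing $(x,y)\mapsto (x\cdot y\cdot A^{n-2})$ nondegenerate on all of $N^1(X_{t_0})_\R$---so in fact \emph{any} relatively ample $A$ works, not just a ``suitable'' one as you hedge. This is more direct and conceptual; the paper's argument, by contrast, is slightly more elementary in that it only ever invokes the surface Hodge index theorem and constancy of a single intersection number at each step.

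Two minor remarks. First, your Gram-matrix formulation tacitly assumes $n\ge 2$; for $n=1$ the fibers are curves, necessarily $\rho=1$, and the statement reduces to constancy of $\deg(D_1|_{X_t})$, which is the paper's base case and should be stated separately. Second, your opening paragraph on Noether--Lefschetz loci and the local system $R^2\phi_*\Z$ is not used anywhere in the actual argument and can be dropped without loss.
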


\begin{proof}
First notice that $D_1|_{X_t}, \ldots , D_{\rho}|_{X_t}$ are linearly dependent in $N^1(X_t)_{\R}$ if and only if they are linear dependent over integers. Therefore we only need to show that given a Cartier divisor $D$ on $\mathcal{X}$ such that $D|_{X_{t_0}}\neq_{\text{num}}0$, one has  $D|_{X_{t}}\neq_{\text{num}}0$ for any $t\in T$. 

We use induction on the dimension of the fibers. First assume that $\text{dim}(\mathcal{X})=\text{dim}(T)+1$. As $X_{t_0}$ is a smooth irreducible curve, the condition $D|_{X_{t_0}}\neq_{\text{num}}0$ is equivalent to $(D.X_{t_0})\neq 0$. The morphism $\phi$ is smooth and $T$ irreducible, therefore the function
\[
t\in T \longrightarrow (D.X_t)
\]
is globally constant. Consequently, $D|_{X_t}\neq 0$ for any $t\in T$ as we wanted. 

In the general case, when $n\geq 2$, let $t_1\in T\setminus\{ t_0\}$ and choose a line bundle $A$ on $\mathcal{X}$ which is very ample relative to the map $\phi$. Bertini's Theorem and generic smoothness says that for a general section $W$ of $A$, the fiber $W_t=W\cap X_t$ is smooth and irreducible for all $t$'s in some open neighborhood of $t_0$. The same statement holds for $t_1$, and using the fact that $T$ is irreducible, one can choose a general section $W$ and an open neighborhood $U\subseteq T$ containing both $t_0$ and $t_1$, such that $W_t$ is smooth and irreducible for all $t\in U$. Now, as $W$ is general, the map 
\[
\phi_{W}^{U} \ = \ \phi|_{W\cap \phi^{-1}(U)} \ : \ W\cap \phi^{-1}(U) \longrightarrow U
\]
is flat and of relative dimension $n-1$. Because each fiber of $\phi_{W}^{U}$ is smooth, $\phi_W^U$ is smooth as well. With this in hand, suppose that $D|_{W_{t_0}}\neq_{\textup{num}} 0$. By applying induction to the family $\phi_{W}^{U}$, we obtain $D|_{W_{t_1}}\neq_{\textup{num}} 0$, hence  $D|_{X_{t_1}}\neq_{\textup{num}} 0$. 

Whenever $D|_{W_{t_0}}=_{\textup{num}}0$, we have two cases. If $n=2$, we can use the fact that $W_{t_0}$ is an ample section of $X_{t_0}$ and  deduce from the Hodge Index Theorem  that $(D|_{X_{t_0}})^2 <0$. Hence by flatness one obtains that $(D|_{X_{t_1}})^2 <0$ and therefore $D|_{X_{t_1}}\neq_{\text{num}} 0$. When  $n\geq 3$, one can use a higher-dimensional version of the Hodge Index Theorem \cite[Corollary I.4.2]{KL66} and deduce that the condition $D|_{W_{t_0}}=_{\text{num}}0$ implies  $D|_{X_{t_0}}=_{\text{num}}0$, contradicting our assumptions.
\end{proof}

\begin{proof}[Proof of Theorem \ref{Theorem}]
Our first step  is to embed every smooth projective variety $X$ of dimension $n$ and Picard number $\rho$ into a product of projective spaces, i.e.
\[
X \ \subseteq \ Y \ = \ \underbrace{\P^{2n+1} \times \ldots \times \P^{2n+1}}_{\rho \text{ times }}
\]
with the property that the restriction map
\[
\rho_X \ : \ V_{\R} \ := \ N^1(Y)_{\R} \ = \ \R^{\rho} \ \rightarrow \ N^1(X)_{\R}
\]
is an integral linear isomorphism. 

To this end fix  $\rho$ very ample Cartier divisors $D_{1,X},\ldots ,D_{\rho ,X}$ on $X$, which form a $\Q$-base of the N\'eron--Severi group $N^1(X)_{\Q}$. As $X$ is a smooth variety, \cite[Theorem 5.4.9]{Sha94} implies that for each $D_{i,X}$ there exists an embedding $X\ \subseteq \ \P^{2n+1}$ with $\mathcal{O}_X(D_{i,X}) \ = \ \mathcal{O}_{\P^{2n+1}}(1)|_X$. With this in hand, we embed $X$ in $Y$ in the following manner
\begin{equation}\label{embedding}
X \ \subseteq \ \underbrace{X\times \ldots \times X}_{\rho \text{ times }} \ \subseteq Y\ ,
\end{equation}
where the first embedding is given by the diagonal. The corresponding restriction map $\rho_X$ on the N\'eron--Severi groups is an integral linear isomorphism identifying the semigroup $\N^{\rho}\subseteq \text{N}^1(Y)_{\Z}$ with the one generated by  $D_{1,X},\ldots ,D_{\rho ,X}$ in $N^1(X)_{\Z}$.

Next, we construct countably many families  such that  each smooth variety $X$ embedded in $Y$ as in (\ref{embedding}) appears as a fiber in at least one of them. We will use multigraded Hilbert schemes of subvarieties embedded in $Y$ for this purpose. Before introducing them, we  note that each line bundle on $Y$ is of the form
\[
\mathcal{O}_Y(\mathbf{m}) \deq p_{1}^{*}(\mathcal{O}_{\P^{2n+1}}(m_1))
\otimes\ldots \otimes p_{\rho}^{*}(\mathcal{O}_{\P^{2n+1}}(m_{\rho}))
\] 
with $\mathbf{m}=(m_1,\ldots ,m_{\rho})\in \Z^{\rho}$ and $p_i : Y \rightarrow \P^{2n+1}$ being the $i$\textsuperscript{th} projection. For  a closed subscheme $X\subseteq Y$, one can define its multigraded Hilbert function as 
\[
P_{X, Y}(\mathbf{m}) \ = \ \chi(X, (\mathcal{O}_Y(\mathbf{m}))|_X), \text{ for all } \mathbf{m}\in\Z^{\rho}\ .
\]
The Hilbert functor $\mathcal{H}_{Y,P}(T)$ parameterizes families of closed subschemes $Z\subseteq Y\times T$ flat over $T$ such that for any $t\in T$ the multigraded Hilbert function of the scheme-theoretical fiber $Z_t\subseteq Y$ equals $P$. In \cite[Corollary $1.2$]{HS04}, Haiman and Sturmfels prove that this functor is representable, i.e. for any $\rho\geq 1$ and $P$ as above the multigraded Hilbert functor $\mathcal{H}_{Y,P}$ is represented by a projective scheme $\textup{Hilb}_{Y,P}$ and by an universal family
\[
\xymatrix{
U_P \ar[drr]_{\phi} & \subseteq & Y\times \textup{Hilb}_{Y,P} \ar[d]^{pr_2}\\
 &  & \textup{Hilb}_{Y,P}  \\}
\]
with the property that there is a bijection between the closed subschemes of $Y$ with the multigraded Hilbert function equal $P$ and the scheme theoretical fibers of $\phi$. 

In the case when $X$ is a smooth projective variety of dimension $n$ and Picard number $\rho$ embedded in $Y$ as in (\ref{embedding}), its multigraded Hilbert function $P_{X,Y}$ is a polynomial with rational coefficients and of total degree at most $n\cdot\rho$. Hence there are countably many polynomials of this form, and therefore countably many families such that any smooth projective variety of dimension $n$ and Picard number $\rho$ appears as a fiber in at least one of them. 

By what we said above, it is enough to verify  countability for one of these flat families. Fix one of them, and call it $\phi :\mathcal{X}\rightarrow T$. Without loss of generality we can assume that $T$ is irreducible and reduced. The fact that $\phi$ is flat implies  by \cite[Theorem 12.2.4]{Gr} that the set of all $t\in T$ for which $X_t$ is smooth, irreducible, and reduced, is open. 

Arguing inductively on $\dim T$, we can restrict our attention to a non-empty open subset of $T$ and assume that all the fibers of $\phi$ are smooth, irreducible, and reduced. This implies that $\phi$ is smooth, so it is enough to prove  countability under this  additional condition. 

The embedding $\mathcal{X}\subseteq Y\times T$ tells us that $\mathcal{X}$ comes equipped with $\rho$ Cartier divisors $D_1,\ldots ,D_{\rho}$, the restriction of the canonical base of $\textup{Pic}(Y)$. Assume further that there exists a closed point $t_0\in T$ such that $X_{t_0}$ is a smooth variety embedded in $Y$ as in (\ref{embedding}). Hence the Cartier divisors $D_{1}|_{X_{t_0}},\ldots ,D_{\rho}|_{X_{t_0}}$ form an $\R$-basis for $N^1(X_{t_0})_{\R}$, and the family $\phi :\mathcal{X}\rightarrow T$ satisfies the conditions in Proposition \ref{linear independent}. We conclude that  the map
\[
\rho_{X_t} \ : \ V_{\R}:=\R D_1\oplus\ldots\oplus\R D_{\rho} \ \rightarrow \ N^1(X_t)_{\R}, \ \text{ where } \rho_{X_t}(D_i):=D_i|_{X_t}
\]
is an injective integral linear morphism for all $t\in T$.

With these preparations behind us we can move on to complete the proof. Let us write 
\[
A_t \deq \rho^{-1}_{X_t}(\Neft ), \textup{ and } f_t \deq \textup{vol}_{X_t} \circ \rho_{X_t} 
\]
for each $t\in T$. We need to show that both sets  $(A_t)_{t\in T}$ and $(f_t)_{t\in T}$, are countable. Actually, it is enough to check that there exists a subset $F=\cup F_m \subseteq T$ ($B = \cup B_m \subseteq T$) consisting of a countable union of proper Zariski-closed subsets $F_m \varsubsetneqq T$ (resp. $B_m \varsubsetneqq T$), such that $A_t$ (resp. $f_t$) is independent of $t\in T\setminus F$ ($t\in T\setminus B$). This reduction immediately  implies Theorem~\ref{Theorem}, because one can argue inductively on dim$(T)$ and apply this reduction for each family $\phi :\phi^{-1}(F_m)\rightarrow F_m$ containing a good fiber.

We first prove the above reduction  for  nef cones. The set of all cones $(A_t)_{t\in T}$ has the following property: if $t_o\in T$, then there exists a subset $\cup F_{t_0}^m\varsubsetneqq T$, which does not contain $t_0$ and consists of a countable union of proper Zariski-closed sets such that
\begin{equation}\label{nefcone}
A_{t_0} \ \subseteq \ A_t, \text{ for all } t\in T\setminus \cup F_{t_0}^m\ .
\end{equation}
To verify this claim  choose an element $D\in A_{t_0} \cap \Z^{\rho}$. By \cite[Theorem 1.2.17]{Laz04} on the behaviour of nefness in families, there exists a countable union $F_{t_0,D}\subseteq T$ of proper subvarieties of $T$, not containing $t_0$ such that $D \in A_t$, for all $t$-s outside of $F_{t_0,D}$. As $A_{t_0}$ is a closed pointed cone, the set $A_{t_0} \cap \mathbb{Z}^{\rho}$ is countable and generates $A_{t_0}$ as a closed convex cone. Thus the cone $A_{t_0}$ is included in $A_{t}$ for all $t$'s  outside of the subsets $F_{t_0,D}$ with $D\in A_{t_0} \cap \Z^{\rho}$. Our base field is uncountable, therefore the union of all of the $F_{t_0,D}$'s still remains a proper subset of $T$.

Denoting  $A \deq \cup_{t\in T} A_t$,  it is enough to find a closed point $t\in T$ with $A_t=A$. Note that $A\subseteq V_{\R}=\R^{\rho}$ is second countable, so  there exists a countable set 
\[
\{t_i\in T| \ i\in \N\} \text{ such that } A=\cup_{i\in\N}A_{t_i}\  
\]
according to \cite[Theorem 30.3]{Mun00}. By (\ref{nefcone}), for every $i\in\N$ there exists a countable union of proper Zariski-closed subsets $F_i\varsubsetneqq T$ with the property that 
\[
A_{t_i} \ \subseteq \ A_t, \text{ for all } t\in T\setminus F_i\ ,
\]
and as before $\cup F_i$ remains a proper  subset. This proves~Theorem \ref{Theorem} in the case of nef cones because  we have $A_{t_i}\subseteq A_t$ and hence $A_t=A$ for each $t\in T\setminus \cup F_i$ and $i\in\N$.

Next, we turn out attention to the case of volume functions. We assumed each fiber $X_t$ to be smooth and irreducible. Since the volume function is continuous, and homogeneous of degree $n$, it is actually enough to prove that for any $D\in V_{\Z}$ the volume $\textup{vol}_{X_t}(D|_{X_t})$ is independent of $t\in T\setminus B$. 

Pick a Cartier divisor $D\in V_{\Z}$. By the Semicontinuity Theorem \cite[Theorem III.12.8]{Ha77},
for any $d\in\N$ there exists a proper Zariski-closed subset $B_{D,d}\varsubsetneqq T$, such that 
\[
h^0(t, \mathcal{O}_X(dD)) \ = \ \textup{dim}_{\mathbb{C}} H^0(X_t, \mathcal{O}_X(dD)|_{X_t}) \textup{ is independent of } t\in T\setminus B_{D,d}\ .
\]
The definition of the volume implies that $\textup{vol}_{X_t}(D|_{X_t})$ is independent of $t\in T\setminus \cup_{d\in\N} B_{D,d}$ and, because $V_{\Z}$ is  countable, the union of $\cup B_{D,d}$, for all $D\in V_{\Z}$ and $d\in\N$, is  a countable union of proper Zariski-closed subsets properly contained in $T$. 
\end{proof}


\section{An example of a transcendental volume function}

The aim of this  section is to give an example of a four-fold $X$ where  the volume function $\textup{vol}_X$ is given by a transcendental function over an open subset of $N^1(X)_\R$. We utilize a construction of Cutkosky (see \cite{Cu86} or \cite[Chapter 2.3]{Laz04}) which was also used in \cite{BKS} to produce a non-polynomial volume function (see also \cite{ELMNP}).

Let $E$ be a general elliptic curve, i.e. without complex multiplication. Set $Y=E\times E$. \cite[Lemma 1.5.4]{Laz04} gives a full description of all the cones on $Y$. Let $f_1$, $f_2$ be the divisor classes of  the fibers of the projections $Y\to E$,  and $\Delta$ the class of the diagonal. Then 
\[
\textup{Nef}(Y)_{\R} \ = \ \overline{\textup{Eff}(Y)}_{\R} \ = \ \{ x\cdot f_1+y\cdot f_2+z\cdot\Delta \ | \ xy+xz+yz\geq 0, x+y+z\geq 0\}\ .
\]
Setting  $H_1=f_1+f_2+\Delta$, $H_2=-f_1$ and $H_3=-f_2$, we define the vector bundle
\[
 V \ = \ \mathcal{O}_{E\times E}(H_1)\oplus 
 \mathcal{O}_{E\times E}(H_2)\oplus \mathcal{O}_{E\times E}(H_3)\ ;
\]
$\pi: X=\P (V)\to Y$ will be the four-fold of our interest.
 
\begin{proposition}\label{trans}
With  notation as above, there exists a non-empty open set in $\textup{Big}(X)_{\R}$, where the volume is given by a transcendental formula.
\end{proposition}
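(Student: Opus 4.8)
The plan is to compute $\textup{vol}_X$ explicitly on a suitable subcone of $\textup{Big}(X)_{\R}$ using the Cutkosky-type description of sections of a projectivized bundle, and then exhibit a point where the resulting formula involves a transcendental quantity attached to the elliptic curve $E$. First I would set up the basic geometry: on $X = \P(V)$ with $\pi : X \to Y$, the Néron--Severi space $N^1(X)_{\R}$ is spanned by $\pi^* f_1$, $\pi^* f_2$, $\pi^* \Delta$ (equivalently $\pi^*N^1(Y)_\R$) together with the Serre class $\xi = \mathcal{O}_{\P(V)}(1)$, and one has the usual relation coming from the Grothendieck relation $\xi^3 = \pi^*c_1(V)\xi^2 - \pi^*c_2(V)\xi + \dots$. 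Since $V$ is a direct sum of line bundles, $H^0(X, \mathcal{O}_X(a\xi + \pi^*B)) = \bigoplus_{i+j+k=a} H^0(Y, \mathcal{O}_Y(iH_1 + jH_2 + kH_3 + B))$ for $a \geq 0$, which reduces the computation of the section ring of $X$ entirely to the geometry of $Y = E \times E$, whose nef and effective cones are given explicitly in the excerpt.

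Next I would identify the relevant open subset of $\textup{Big}(X)_{\R}$ and carry out the asymptotic count. Fix a class of the form $a\xi + \pi^*B$ and expand $h^0$ as above; replacing $a\xi+\pi^*B$ by its $m$-th multiple, the dimension becomes $\sum_{i+j+k=ma} h^0(Y, \mathcal{O}_Y(iH_1+jH_2+kH_3+mB))$. For divisor classes on the abelian surface $Y$ that lie in the interior of the nef cone, $h^0$ equals the self-intersection number over $2$ (by Riemann--Roch and vanishing on abelian varieties), which is an explicit quadratic form $q$ in $(i,j,k)$ via the given description $\textup{Nef}(Y)_\R = \{xf_1+yf_2+z\Delta : xy+xz+yz \geq 0,\ x+y+z\geq 0\}$; on classes outside the pseudoeffective cone $h^0$ vanishes. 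Substituting $H_1 = f_1+f_2+\Delta$, $H_2 = -f_1$, $H_3 = -f_2$, the region of $(i,j,k)$ contributing to the sum — those for which $iH_1+jH_2+kH_3 + mB$ is pseudoeffective (actually nef, up to lower order) — is cut out by two quadratic inequalities in $(i,j,k)$ together with the simplex $i+j+k = ma$, $i,j,k \geq 0$. Therefore $\textup{vol}_X(a\xi+\pi^*B)/4!$ is, in the limit, the integral of the quadratic form $q$ over a region of the plane $\{i+j+k = a\}$ bounded by conics; this is exactly the kind of integral that produces logarithms or arctangents of algebraic numbers rather than a polynomial or rational function. The main point is that along the boundary of the big cone of $X$ the effective region degenerates so that on one side of a wall the integration domain is a full region bounded by a genuine conic (not a degenerate one), so the volume is a nonpolynomial — indeed transcendental — function of $(a, B)$ on that open chamber.

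The step I expect to be the main obstacle is pinning down precisely which open subset of $\textup{Big}(X)_{\R}$ to work on and verifying that the volume there is genuinely transcendental rather than merely non-polynomial: one must check that the relevant zariski-decomposition-type behaviour forces the negative part to move in a way governed by the conic $xy+xz+yz = 0$, and then evaluate (or estimate closely enough) the resulting area/moment integral to recognize a transcendental constant — e.g. something of the form $\log(2+\sqrt3)$ or a similar value of an elementary transcendental function at an algebraic argument, which is transcendental by Baker's theorem (or by Lindemann--Weierstrass / the Gelfond--Schneider circle of results). Concretely, I would choose $B$ so that for $a$ in some range the class $a\xi + \pi^*B$ is big, compute the moving part of $iH_1+jH_2+kH_3+mB$ as a function on the simplex, integrate the square of the associated quadratic form over the sub-simplex where the class is nef, and simplify; the answer will be a piecewise formula in the coordinates of $N^1(X)_\R$, and on at least one piece it is a transcendental function because the antiderivative of a quadratic divided by (or composed with) the conic constraint $xy+xz+yz$ produces a logarithmic or inverse-trigonometric term. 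Finally, evaluating this transcendental volume function at an integral Cartier class with big, non-nef pullback gives a single transcendental real number, which is the promised Cartier divisor of transcendental volume and shows $\mathbb{V}$ contains transcendental numbers.
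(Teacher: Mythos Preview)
Your approach is essentially the same as the paper's: reduce via the projection formula/splitting of $V$ to a Riemann--Roch count on the abelian surface $Y$, pass to an integral of the quadratic form $(\cdot)^2$ over the slice of the simplex lying inside $\textup{Nef}(Y)_\R$, and observe that the conic boundary $xy+xz+yz=0$ forces transcendental (log/arctan-type) terms. The paper's own argument is in fact terser than yours at the final step --- it simply appeals to a Maple computation rather than invoking Baker/Lindemann--Weierstrass --- and note one small slip: the nef cone of $Y$ is cut out by one quadratic and one linear inequality, not two quadratics.
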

\begin{proof}
The characterization of line bundles on projective space bundles,  and the fact that the function $\textup{vol}_X$ is 
continuous, and homogeneous on $\textup{Big}(X)_{\R}$, imply that it is enough to handle $\Q$-divisors of the form
\[
M \ = \ \mathcal{O}_{\P (V)}(1) \otimes \pi^*(\mathcal{O}_Y(L'))
\]
with $L'=c_1f_1+c_2f_2+c_3\Delta$  a $\Q$-Cartier divisor on $Y$ with $(c_1,c_2,c_3)\in\Q_+^3$. 
By the projection formula the volume of $A$ is given by 
\begin{equation}\label{vol}
\textup{vol}_X(M) \ = \ 
\lim_{m\rightarrow \infty}\frac{\sum_{a_1+a_2+a_3=m}h^0(mL'+a_1H_1+a_2H_2+a_3H_3)}{m^4/24}\ ,
\end{equation}
where the sum runs over all $a_i\in\N$'s and the limit over sufficiently divisible values of $m$.
 
In general there is no simple formula in terms of the $a_i$'s for the
right hand side. Nevertheless, when the divisor $mL'+a_1H_1+a_2H_2+a_3H_3$ is ample, then 
\[
 \hh{0}{Y}{mL'+a_1H_1+a_2H_2+a_3H_3} \equ \frac12 ((mL'+a_1H_1+a_2H_2+a_3H_3)^2)
\]
according to the Riemann--Roch theorem on the abelian surface $Y$. 

First, we will show that in the limit as $m$ goes to infinity, the contribution of non-ample
divisors to the sum contained in (\ref{vol}) is negligible. This is done in  the following lemma.
\begin{lemma}
There is a quadratic function of $m$, $F(m)$, such that 
\begin{equation}\label{sum}
\sum_{a_1+a_2+a_3=m, a_i\geq 0} h^0(mL'+a_1H_1+a_2H_2+a_3H_3) \dleq F(m) \ ,
\end{equation}
where the sum runs over all values of $a_1,a_2,a_3$ and $m$, for which $mL'+a_1H_1+a_2H_2+a_3H_3$ is not ample.
\end{lemma}
\begin{proof}
Note that in the sum given in (\ref{sum}), we only need to consider those divisors $D$ of the form $D=mL'+aH_1+a_2H_2+a_3H_3$ that are effective. On the other hand on $Y$ all effective divisors are nef and any non-ample effective divisor $D'$ satisfies $D'^2=0$.

With this in hand, we now show  that for any $m$ there are at most $2(m+1)$ possible choices of $(a_1, a_2, a_3)\in\N^3$ with $a_1+a_2+a_3=m$ and $D$ is non-ample and effective. Indeed, since $a_3=m-a_1-a_2$, on fixing $a_1$ the expression $(mL'+a_1H_1+a_2H_2+a_3H_3)^2$ becomes a quadratic expression in $a_2$ whose $a_2^2$ coefficient is $(H_2-H_3)^2=-2$. This non-zero quadratic expression has at most 2 integral solutions, so for any $a_1\in\{0, \ldots , m\}$ there are at most 2 values for the pair $(a_2, a_3)$ with $a_1+a_2+a_3=m$ and $D$ is effective and non-ample.

It remains to find a bound on $h^0(D)$ which depends only on $m$. Fix  an ample divisor $A$ once and for all; we then have that 
\[
h^0(D)\leq h^0(D+A)= (D+A)^2 \equ  2A\cdot D+ A^2 \equ  2(A\cdot D/m) m + (A^2)\ ,
\]
and that the divisor $D/m$ is contained in the compact set
\[ 
S \deq  \{D' \ | \ D'=L'+b_1A_1+b_2A_2+b_3A_3, b_i  \geq 0, b_1+b_2+b_3=1\} \ ,
\]
 so on setting $N \deq  \max_S (A\cdot D')$, we arrive at the conclusion
\[
h^0(D)\dleq \frac{(D+A)^2}{2} \dleq N m +A^2/2\ . 
\]
Define $F(m) \deq  (m+1)(2Nm+A^2)$ and this quadratic function satisfies (\ref{sum}).
\end{proof}

As a consequence, we can  write our volume function as 
\[
\text{vol}_X(M)=\lim_{m\to\infty}\frac{4!}{2m^4}\cdot \hspace{-1cm}\sum_{\stackrel{a_1+a_2+a_3=m}
{mL'+a_1H_1+a_2H_2+a_3H_3 \mbox{ ample }}} \hspace{-1cm}((mc_1+a_1-a_2)f_1+(mc_2+a_1-a_3)f_2+(mc_3+a_1)\Delta ))^2\ .
\]
Via the substitutions $x= a_2/m$ and $y=a_3/m$ this limit is equal to the integral 
\[
\text{vol}_X(M) \equ  12\int_\Gamma ((1+c_1-2x-y)f_1+(1+c_2-x-2y)f_2+(1+c_3-x-y)\Delta ))^2 
\]
where $\Gamma$ is the subset of $\mathbb{R}^2$ defined by: $x, y \geq 0, x+y\leq 1$ and the class
\[ 
L(x,y) := (1+c_1-2x-y)f_1+(1+c_2-x-2y)f_2+(1+c_3-x-y)\Delta )\mbox{ is ample}.
\]
Setting $q(x,y)= (L(x,y))^2$, we have that
 \[\text{vol}_X(M)= 12\int_\Gamma q(x,y)dxdy.\]
It's not hard to see and also useful to write $q(x,y)=10y^2+B(x)y+C(x)$, where $B(x),C(x)\in \Q(c_1c_2c_3)[x]$. Assume that $c_1,c_2,c_3\in\Q_+$ and $c_1+c_2<1\leq c_1+c_2+2c_3$. This implies that the class $L(0,0)$ is ample and $L(x,y)^2<0$ for any $x,y\geq 0$ with $x+y=1$.

Under these circumstances, $\Gamma$ is the region bounded by:
\begin{enumerate}
\item the x-axis
\item the y-axis and
\item the graph $y= F(x)$, where $F(x)=\frac{B(x)-\sqrt{B^2(x)-40C(x)}}{20}$ is the solution of the equation $q(x,y)=0$. 
\end{enumerate}
Let $X$ be the smallest positive number such that $q(X,0)=C(X)=0$. (Note that
$X\in \overline{\Q(c_1,c_2,c_3)}$.) We can then rewrite our
calculation as
\[ 
\text{vol}_X(M) \equ 12\int_0^X\int_0^{F(x)} q(x,y)dydx
\]
or, in other words,
\[ 
\text{vol}_X(M) \equ  12\int_0^X 10 F(x)^3/3+ B(x) F(x)^2/2+ C(x)F(x)dx\ .
\]
After Euclidean division by the relation $10 F(x)^2+ B(x)F(x)+C(x)=0$ we get
\[ 
\text{vol}_X(M) \equ 12\int_0^X  \frac{40C(x)-(B(x))^2}{60} F(x)dx -12\int_0^X\frac{C(x)B(x)}{60} dx\ .
\]
Denote the second term of the right hand side by $G_1(c_1,c_2,c_3)$. Note that $G_1(c_1,c_2,c_3)\in \overline{\Q(c_1,c_2,c_3)}$ and using the explicit description of $F(x)$, we obtain
\[ 
\text{vol}_X(M) \equ 12\int_0^X  \frac{\big(40C(x)-(B(x))^2\big) \big(B(x)-\sqrt{B^2(x)-40C(x)}\big)}{1200}dx- 
G_1 \ ,
\]
which gives us
\[ 
\text{vol}_X(M) \equ 12\int_0^X  -\frac{\big((B(x))^2-40C(x)\big)^{3/2}}{1200}dx+ G_2(c_1,c_2,c_3)
\]
with 
\[
G_2 \deq -G_1+12\int_0^X \frac{(40C(x)-(B(x))^2) (B(x))}{1200}dx\in \overline{\Q(c_1,c_2,c_3)}\ . 
\]
Let the quadratic function $p(x)=(B(x))^2-40C(x)$ be written in the form $p(x)=ax^2+bx+c$, where  $a,b,c\in \overline{\Q(c_1,c_2,c_3)}$.
We then have that
\[ 
\text{vol}_X(M) \equ -\frac{1}{100}\int_0^X p(x)^{3/2}dx+ G_2(c_1,c_2,c_3)\ ,
\]
and a Maple calculation shows
\[ 
\text{vol}_X(M)= -\frac{1}{100}\Big(\frac{3 (b^2-4ac)^2}{128a^{5/2}}\Big)
\ln{\Big(\frac{b+2aX+ 2(a^2X^2+baX+ca)^{1/2}}{b+2(ca)^{1/2}}\Big)}+G_3(c_1,c_2,c_3)\ ,
\]
where $G_3\in \overline{\Q(c_1,c_2,c_3)}$. 

It remains to check that the function inside the logarithm is not identically $1$ and the one appearing as the coefficient is not identically $0$.  So, take $c_1=c_2=c_3=1/4$. Then 
\[ 
q(x,y) \equ 10y^2+(22x-20)y+75/8-20x+10x^2
\]
which results in $B=22x-20$ and $C=10x^2-20x+75/8$. Furthermore we have that $X= 3/4$ and $B(x)^2-40C(x)= 84x^2-80x+25$, i.e. $a=84$, $b=-80$ and $c=25$. With this in hand, the volume turns our to be
\[ 
\text{vol}_X(M)= -\frac{1}{100}\Big(\frac{15626\sqrt{84}}{98784}\Big)
\ln{\Big(\frac{23+\sqrt{1029}}{-40+\sqrt{2100}}\Big)}+G_3(\frac{1}{4},\frac{1}{4},\frac{1}{4})\ ,
\]
Thus transcendental and this completes the proof of Proposition \ref{trans}.\end{proof}

The transcendental nature of the volume function in a geometrically simple situation leads to a new relation linking complex geometry to diophantine questions. 
In their inspiring work \cite{KZ} (see also \cite{W}), Kontsevich and Zagier write about the ubiquitous nature of periods. According to their
definition, a  complex number $\alpha$ is a period, if it can be written as the integral of a rational  function with rational coefficients over an algebraic
domain (a subset or Euclidean space determined by polynomial inequalities with rational coefficients).

By their very definition, periods are countable in number, and contain all algebraic numbers. On the other hand, various transcendental numbers  manifestly belong to this circle, $\pi$ or the natural logarithms of positive integers among them. It's easy to verify that periods form a ring with respect to the usual operations on real numbers. Although it is obvious from cardinality considerations that most real numbers are not elements of this ring, so far only one real number has been proven \emph{not} to be a period by Yoshinaga \cite{Yo08}. 

Using results of Lazarsfeld and Musta\c t\u a from \cite{LM08}, the volume of a Cartier divisor $D$ can be written as 
\[
 \vol_X(D) \equ \int_{\Delta_{Y_\bull}{(D)}} 1\ ,
\]
where $\Delta_{Y_{\bull}}(D)$ is the Okounkov body of $D$ with respect to any admissible complete flag $Y_\bull$ of subvarieties in $X$. This means that
$\vol(D)$ --- originally defined as the asymptotic rate of growth of the number of global sections of multiples of $D$ --- is a period in a very  natural way,
whenever $\Delta_{Y_{\bull}}(D)$ is an algebraic domain for some suitably chosen admissible flag. This happens in all the cases that have been explicitly computed so far, leading to the following question.
\begin{qsn}
Is  the volume of an integral Cartier divisor on an irreducible projective variety always a period?
\end{qsn}

\end{document}